\mathchardef\re="023C
\newtheorem{thm}{Theorem}
\newtheorem{defi}[thm]{Definition}
\newtheorem{lem}{Lemma}
\newtheorem{prop}{Proposition}
\newtheorem{cor}{Corollary}
\newcommand{\bcor}{\begin{cor}}
\newcommand{\ecor}{\end{cor}}
\def\be{\begin{equation}}
	\def\ee{\end{equation}}
\newcommand{\beq}{\begin{eqnarray}}
\newcommand{\beqq}{\begin{eqnarray*}}
\newcommand{\eeq}{\end{eqnarray}}
\newcommand{\eeqq}{\end{eqnarray*}}
\newcommand{\bprop}{\begin{prop}}
\newcommand{\eprop}{\end{prop}}
\newcommand{\bpf}{\begin{pf}}
\newcommand{\epf}{\end{pf}}
\newcommand{\D}{{\mathbb D}}
\newcommand{\ID}{{\mathbb D}}
\newcommand{\zb}{{\overline{z}}}
\newcommand{\dd}{\,\mathrm{d}}
\begin{document}

\bibliographystyle{plain}

\title[]{Lipschitz continuity for harmonic functions and  solutions of the \(\bar{\alpha}\)-Poisson    equation}

\author[]{Miodrag Mateljevi\' c,  Nikola Mutavd\v{z}i\' c, Adel Khalfallah}


\address{Faculty of mathematics, University of Belgrade, Studentski Trg 16, 11000 Belgrade, Serbia}
\email{\rm miodrag@matf.bg.ac.rs, nikola.math@gmail.com}


\date{}

\begin{abstract}

We study  Lipschitz continuity for  solutions of the \(\bar{\alpha}\)-Poisson    equation in planar cases. We also review some recently obtained results. As corolary we can restate results for harmonic and gradient  harmonic  functions.

\end{abstract}

\maketitle

\footnotetext[1]{\, Mathematics Subject Classification 2010 Primary 42B15, Secondary 42B30. }

\footnotetext[2]{\, Key words
and phrases: Poisson kernel, harmonic functions.}


\section{Introduction and preliminaries}
The weighted Laplacian operator $L_\rho$ is defined by $L_\rho= D_z(\rho D_{\overline{z}})$  and   $L^*_\rho= D_{\overline{z}}(\rho D_z)$.
If the weight function $\rho$  is chosen to be $\rho_\alpha (z) =(1-|z|^2)^{-\alpha}$ ($\alpha>-1$) in the unit disk $\mathbb{U}$, we call
$L_\rho$ the standard weighted Laplacian operator and write it by $L_\alpha$ for simplicity, and $\overline{L_\alpha}$ will be notation for $L^*_\rho$.

Our main result is:
\newtheorem*{main}{Theorem \ref{main}}
\begin{main}  Assume that $g \in C(\mathbb{D})$ is such that $(1-|z|^2)^\alpha g$ is bounded and $u\in V_{\mathbb{D}\rightarrow \Omega}[g]$ with the representation $u(w) = v(w) + G_{\alpha}[g](w)$. If $\alpha > 0$, and $\underline{v}$ is Lipshitz, then $u$ is also Lipshitz continuous on $\ID$.

\end{main}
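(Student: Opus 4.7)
The plan is to write $u = v + G_{\alpha}[g]$ and show separately that each summand is Lipschitz on $\mathbb{D}$; the conclusion then follows by the triangle inequality. Since $v$ is the $\bar{\alpha}$-harmonic extension of the Lipschitz boundary datum $\underline{v}$, I would first invoke a boundary-to-interior Lipschitz estimate for solutions of the homogeneous $\bar{\alpha}$-Poisson equation: one represents $v(w) = P_{\alpha}[\underline{v}](w)$ and estimates $\nabla v(w)$ by subtracting $\underline{v}(\zeta_{w})$ at a boundary point $\zeta_{w}$ closest to $w$, so that
\[
\nabla v(w) = \int_{\mathbb{T}} \nabla_{w} P_{\alpha}(w,\zeta) \bigl[\underline{v}(\zeta) - \underline{v}(\zeta_{w})\bigr] \,\dd\sigma(\zeta),
\]
and then uses the Lipschitz modulus of $\underline{v}$ together with a standard kernel bound for $|\nabla_{w} P_{\alpha}(w,\zeta)|$. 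This step fits into the body of results recalled in this paper and yields a uniform bound on $|\nabla v|$ over $\mathbb{D}$.

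Next, for the Green's potential
\[
G_{\alpha}[g](w) = \int_{\mathbb{D}} G_{\alpha}(w,z)\, g(z) \,\dd A(z),
\]
I would differentiate under the integral sign and isolate the weight off $g$: writing $g(z) = (1-|z|^2)^{-\alpha} h(z)$ with $|h| \le M$, one obtains
\[
|\nabla G_{\alpha}[g](w)| \le M \int_{\mathbb{D}} |\nabla_{w} G_{\alpha}(w,z)| (1-|z|^2)^{-\alpha} \,\dd A(z).
\]
The task is then to show that the right-hand side is uniformly bounded in $w \in \mathbb{D}$. The natural strategy is to split $\mathbb{D}$ into three regions relative to $w$: a disk around $w$ of radius comparable to $1-|w|$, where the diagonal behaviour of $\nabla_{w} G_{\alpha}$ is the issue; a boundary layer where $(1-|z|^2)^{-\alpha}$ is singular; and the bulk in between. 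In each region one substitutes known size estimates for the $\alpha$-Green's function and its gradient and integrates.

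The crux, and the only place where the hypothesis $\alpha > 0$ is genuinely used, will be the boundary-layer estimate: there the weight $(1-|z|^2)^{-\alpha}$ blows up and one must rely on the vanishing of $G_{\alpha}(w,z)$ at $\mathbb{T}$ at a rate encoded by $\alpha$ to absorb this singularity. If $|\nabla_{w} G_{\alpha}(w,z)|$ picks up a factor of order $(1-|z|^2)^{\alpha}$ as $|z| \to 1$, as is expected for this weighted Green's function, the two boundary factors cancel and the remaining integrand is controlled; the condition $\alpha > 0$ is precisely what makes the cancellation strict and uniform in $w$. Adding the contributions from the three regions yields a bound $|\nabla G_{\alpha}[g](w)| \le C$ uniform on $\mathbb{D}$, and combined with the bound on $|\nabla v|$ the sum $u$ has bounded gradient on the convex domain $\mathbb{D}$ and is therefore Lipschitz continuous, completing the proof.
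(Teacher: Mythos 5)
Your decomposition $u=v+G_\alpha[g]$ and the plan of bounding the gradient of each summand separately is exactly the architecture of the paper's proof: the Green-potential half is the content of Theorem \ref{grin-lip}, which the paper obtains from the pointwise derivative estimates \eqref{G-po-w}--\eqref{G-po-w-konj} together with a M\"obius change of variables and the integral bounds of Lemmas \ref{i2-eq}--\ref{i2-lema}. Your sketch of that half is consistent with this (the $z$-factor in the derivative of $G_\alpha$ is in fact $(1-|z|^2)^{\alpha+1}$, so after multiplying by $(1-|z|^2)^{-\alpha}$ a full power of $(1-|z|^2)$ survives), although you leave the actual integral estimates to ``known size estimates'' where the paper has to do real work.

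The genuine gap is in your treatment of $v$, and in particular in the assertion that the hypothesis $\alpha>0$ is used \emph{only} in the boundary layer of the Green potential. That is not so: $\alpha>0$ is essential for the step you call ``standard.'' For $\alpha=0$ the statement ``$\underline{v}$ Lipschitz $\Rightarrow$ $v=P[\underline{v}]$ Lipschitz on $\mathbb{D}$'' is false --- by Theorem \ref{alpha-har-lip} one needs, in addition, $H(\underline{v}')\in L^\infty$, and classical conjugate-function examples show this can fail. Your subtraction argument with the bound $|\nabla_w P(w,\zeta)|\lesssim |w-\zeta|^{-2}$ produces only $\int_{\mathbb{T}}|\zeta-\zeta_w|\,|w-\zeta|^{-2}\,\dd\sigma(\zeta)\approx \log\frac{1}{1-|w|}$, which is unbounded; so as written your argument would ``prove'' the false $\alpha=0$ statement. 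To make it work you must exploit the extra decay of the $\alpha$-kernel, namely the factor $\bigl((1-|w|^2)/|1-\overline{w}\zeta|\bigr)^{\alpha}$ carried by $\nabla_w P_\alpha$, which converts the logarithm into $(1-|w|)^{\alpha}\int_0^{\pi}\theta(1-|w|+\theta)^{-\alpha-2}\,\dd\theta=O(1)$ precisely when $\alpha>0$. This is the crux of that half of the theorem; the paper does not run the kernel estimate directly but instead invokes Theorem \ref{AK-MM-thm} (boundedness of $\partial_z v$ and $\partial_{\overline{z}}v$ for $\alpha>0$ when $\underline{v}'\in L^\infty$) through Theorem \ref{alpha-har-lip}. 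Your route can be completed, but you must identify and prove this kernel decay rather than cite a generic Poisson-kernel bound.
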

 We can restate this result by means of certain solutions to $\alpha-$Poisson's equation. First we consider some basic properties of $\alpha-$harmonic mappings.
In particular, we improve Chen and Kalaj result \cite{lip3}.  Behm \cite{behm} found Green function and  solved the  Dirichlet boundary value problem of the $\alpha$-Poisson equation. Our method is based on Theorem \ref{grin-lip}  which gives estimate  of the Green potential $G_{\alpha}$ of $g$    and   the  local  $C^2$-coordinate  method  flattering the boundary  \cite{otha1-2020}.

At the begining of this paper we will introduce basic notation together with definition of so called {\it $\alpha-$Laplacian} and {\it $\alpha-$harmonic functions}. Also, definition and properties of {\it $\alpha-$Poisson's kernel} and {\it $\alpha-$Poisson's integral} are stated, as a very important technical assets used in our research. More information about this notion can be found in Olofsson's and Wittsten's paper \cite{oA}.  In the proceeding we recall definition of {\it Green function for $\alpha-$Laplacian} which is thoroughly invesigated in Behm \cite{behm}. Formulation and solution of Dirichlet boundary value problem for $\alpha-$Poisson's equation, proven in Chen and Kalaj paper \cite{ChKal}, is shown in Theorem \ref{solAlfaDir}. In paper \cite{lip1} Chen used this result to prove the boundary characterizations of a Lipschitz continuous  $\overline{\alpha}$-harmonic mappings, and proved Theorem \ref{ChenLip}.

 As an introductory result of this paper, we loose assumption on boundary value of $\overline{\alpha}$-harmonic mapping $v$, which is written in part $(d)$ of Theorem \ref{ChenLip} and attain Theorem \ref{alpha-har-lip}. Proof of this theorem is based on Hardy space technique which can be found in the first author's monography \cite{topic}, and Theorem \ref{AK-MM-thm}  proven in first author's and A. Khalfallah's paper \cite{AK-MM}.
The second improvement of Theorem \ref{ChenLip}  consider the condition on $g=-\overline{L_\alpha}u$. This result is proven in Theorem \ref{grin-lip}, and uses various estimates which we establish in Sunsection 2.3.

\subsection{$\alpha-$harmonic mappings}

Linear and semilinear equations can be tre\-a\-ted together.
We take
$a(x, y)u_x + b(x, y)u_y = c(x, y, u)$,  where a, b, c are $C^1$ functions of their arguments. The operator $a(x, y)u_x + b(x, y)u_y$
on the left hand side of this equation represents differentiation in a direction ($a,b$)
at the point $(x, y)$ in $(x, y)$-plane. Let us consider a curve, whose tangent at each
point ($x,y$) has the direction $(a, b)$. Coordinates $(x(s), y(s))$ of a point on this curve
satisfy
(1) $dx/ds = a(x, y), dy /ds = b(x, y)$
or
(2) $dy/dx = b(x, y)/a(x, y)$.

Two complex derivatives $\frac{\partial}{\partial z}=D_{z}$
and $\frac{\partial}{\partial\overline{z}}=D_{\overline{z}}$  of $u$  are written by  $$\frac{\partial}{\partial z}u=D_{z}u= \frac12(u_x -i u_y)\quad \mbox{and}\quad \frac{\partial}{\partial\overline{z}}u=D_{\overline{z}}u= \frac12(u_x +i u_y)$$  respectively, where   $z = x + iy$.\smallskip

The weighted Laplacian operator $L_\rho$ is defined by $$L_\rho= D_z(\rho D_{\overline{z}})\quad\mbox{and}\quad L^*_\rho= D_{\overline{z}}(\rho D_z).$$
If the weight function $\rho$  is chosen to be $\rho_\alpha (z) =(1-|z|^2)^{-\alpha}$   in the unit disk $\mathbb{U}$, we call
$L_\rho$   the standard weighted Laplacian operator and write it by $L_\alpha$ for simplicity,
here $\alpha$  is a real number with $\alpha > - 1$.
It is clear that
\beq
L^*_\rho u=\overline{L_\alpha} u= D_{\overline{z}}\rho   D_z u + \rho D_{\overline{z} z}u= \alpha (1-|z|^2)^{-\alpha-1} z u_z + (1-|z|^2)^{-\alpha}u_{\overline{z} z} \notag \\
L_\alpha u= \alpha (1-|z|^2)^{-\alpha-1}\overline{z} u_{\overline{z}} + (1-|z|^2)^{-\alpha}u_{\overline{z} z}\notag
\eeq

First we can see that $L^*_\alpha u =0$ iff
\be
(1-|z|^2)u_{\overline{z} z} +  \alpha  z u_z =0
\ee

It can be easily checked that $L_\alpha u=0$ iff  $L^*_\alpha \overline{u} =0.$

Set  $p=u_z$ and  $q=u_{\overline{z}}$.
Since  $\overline{u}_{\overline{z}}=\overline{u_z}$  and   $\overline{u}_z=\overline{u_{\overline{z}}} $, we find

$z u_z $  and $\overline{z} u_{\overline{z}}$    are conjugate, and also

$u_{\overline{z} z}=q_z$ and  $\overline{u}_{\overline{z} z}=\overline{q}_{\overline{z}}= \overline{q_z}$  and therefore $u_{\overline{z} z}$ and  $\overline{u}_{\overline{z} z}$ are conjugate.

If we   set   $d(z)=1-|z|^2$, then   $\rho_\alpha= d^{-\alpha}$  and  by easy computation  we find
$$\rho_z= \alpha d^{-\alpha} \overline{z},\quad \rho_{\overline{z}}= \alpha d^{-\alpha-1} z,\quad \rho_x= 2 \alpha d^{-\alpha-1}x\quad \mbox{and}\quad \rho_y= 2 \alpha d^{-\alpha-1}y.$$

Since, $2 \rho D_{z}u= \rho(u_x -i u_y)$, we find  $$4 L_\rho=D_x [\rho(u_x -i u_y)] + i D_y [\rho(u_x -i u_y)] =D_x (\rho u_x )  + D_y (\rho u_y )  + i (\rho_y u_x - \rho_x u_y).$$

Hence   $$4 L_\rho=\rho \Delta u  +  \rho_x u_x + \rho_y u_y +  i (\rho_y u_x - \rho_x u_y).$$

If $u$ is real-valued function then $L_\rho u=0$ iff  $\rho  \Delta u +\rho_x u_x + \rho_y u_y =0$  and  $yu_x -xu_y=0$, that is  $$ \Delta u  + 2 \alpha \rho_1 (xu_x +y u_y) \mbox{ and }  yu_x -xu_y=0.$$

By \cite{wikiDJ}, solutions of   $yu_x -xu_y=0$ is  $u=f(x^2+y^2)$. Since  $\rho u_z= \rho g(r) \overline{z}$, we find  $\rho g(r) r^2=z F(z)=c$  and hence  $F=0$  and  $u_z= 0$. Thus $u=c$.

If a function   $u\in C^2(\mathbb{U})$ satisfies the $\alpha$-harmonic equation
$$L_\alpha (u)= 0,$$
then we call it an $\alpha$-harmonic mapping. In the case $\alpha = 0$, $\alpha$-harmonic mappings
are just Euclidean harmonic mappings. For $L_\alpha$  notation $\Delta_\alpha$  is also used in the literature.\smallskip



\subsection{$\alpha$-Poisson's integral}
In \cite{oA}, Olofsson and Wittsten showed that if an $\alpha$-harmonic function $f$ satisfies
$$\lim_{r\to 1^{-}}f_{r}=f^{\ast} \in \mathcal{D}'(\mathbb{T}) \;\; (\alpha > -1),$$ then it has the form of a \emph{Poisson type integral}
\beq
f(z)=P_{\alpha}[f^{\ast}](z)=\frac{1}{2\pi}\int^{2\pi}_{0}P_{\alpha}(ze^{-i\theta})f^{\ast}(e^{i\theta})d\theta \nonumber
\eeq in $\mathbb{D}$,
where
$$P_{\alpha}(z)=\frac{(1-|z|^{2})^{\alpha+1}}{(1-z)(1-\overline{z})^{\alpha+1}}$$
is the {\it $\alpha$-harmonic (complex valued)  Poisson kernel} in $\ID$. In the case $\alpha=0$ we have classiacal Poisson's kernel for harmonic function and we write it as $P$ instead of $P_0$. Also, we write $P[f^*]$ instead of $P_0[f^*]$ for Piosson's integral of the function $f^*$.

\section{Lipschitz continuity for solutions of the \(\bar{\alpha}\)-Poisson    equation}

\subsection{An introductory result}\hfill

As a starting point of our investigation we used Theorem \ref{ChenLip} which can be found in Chen's paper \cite{lip1}. This theroem gives some rather strong assumption on $g=-\overline{L_\alpha} u$ ($g\in C(\ID))$ as well as for boundary values of $u$ (condition $(d)$ of Theorem \ref{ChenLip}), which are proven to be sufficient for Lipshitz continuity of $u$.

In order to formulate basic result we need to introduce some preliminary notes.
Let $V_{D\rightarrow \Omega}[g]$ denote the family of solutions of  $v: D \rightarrow \Omega $, $v\in C^2(\mathbb{D})$ of the {\it $\overline{\alpha}$-Poisson equation}
\be\label{eqHypDir}\left\{
\begin{array}{ll}
v(z)=f(z),        & \hbox{if} \,\, z\in  \mathbb{T}, \\
-(\overline{L_{\alpha}})v(z)=g, & \hbox{if}  \,\,  z\in  \mathbb{D},
\end{array}
\right.\ee
where $g \in C(\overline{\mathbb{D}})$, $f\in L^1(\mathbb{D})$ is the limit of $v(re^{i\theta} )$ as r tends to $1^-$, and $v$ is a sense-preserving diffeomorphism.\smallskip

For the case wherein the boundary function $f$ vanishes, Behm \cite{behm} solved the above Dirichlet boundary
value problem of the $\overline{\alpha}-$Poisson equation. In paper \cite{ChKal},
Chen and Kalaj combined the representation theorem given by Olofsson and Wittsten \cite{oA} with the
one given by Behm. They obtained the following theorem.

\begin{thm}[\cite{ChKal}]\label{solAlfaDir}
    Let $g\in C(\mathbb{D})$ be such that $(1-|z|^2)^{\alpha+1}g$ belongs to $L^1(\mathbb{D})$, wherein $\alpha>-1$. If $u\in C^2(\mathbb{D})$ is a solution of the equation $-\overline{L_\alpha}u=g$ satisfying the condition $u(re^{it})$ converges to a function $f\in L^1(\mathbb{T})$ as $r\to 1^-$, then for every $w\in\mathbb{D}$ we have
\begin{equation*}
    u(w)=v(w)+G_\alpha[g](w),\vspace{-2mm}
\end{equation*}
where
\begin{equation*}
    v(w){=}\frac{1}{2\pi}\int_\mathbb{T}\frac{(1{-}|w|^2)^{\alpha+1}}{(1{-}z\overline{w})(1{-}\overline{z}w)^{\alpha+1}}f(z)\dd\theta,\quad G_\alpha[g](w){=}\iint_\mathbb{D}G_\alpha(z,w)g(z)\dd x\dd y,
\end{equation*}
and the Green function $G_\alpha(z,w)$ of the adjoint Laplacian $L_\alpha$ is given by
\begin{align*}
    G_\alpha(z,w)=\frac{(1-\overline{z}w)^{\alpha}h(q(z,w))}{2\pi},\mbox{ with } z\neq w,\\
    h(r)=\frac12\int_0^{1-r^2}\frac{t^\alpha}{1-t}\dd t, q(z,w)=\left|\frac{z-w}{1-\overline{w}z}\right|.
\end{align*}
\end{thm}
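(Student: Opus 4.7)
The plan is to verify the decomposition $u = v + G_\alpha[g]$ by a standard superposition argument: subtract the Green potential from $u$, check that the difference is $\overline{\alpha}$-harmonic with the correct boundary trace $f$, and then identify it with the $\alpha$-Poisson integral $v$ via the Olofsson--Wittsten representation recalled above.

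First I would verify that the Green potential is well-defined and that $-\overline{L_\alpha}(G_\alpha[g]) = g$ on $\mathbb{D}$. The integrability hypothesis $(1-|z|^2)^{\alpha+1}g \in L^1(\mathbb{D})$ is tailored to the boundary decay of the kernel: from the elementary estimate $h(q) = \tfrac12\int_0^{1-q^2} t^{\alpha}(1-t)^{-1}\,dt \lesssim (1-q^2)^{\alpha+1}$, combined with the M\"obius identity $1 - q(z,w)^2 = (1-|z|^2)(1-|w|^2)/|1-\overline{w}z|^2$, one obtains $|G_\alpha(z,w)| \lesssim (1-|z|^2)^{\alpha+1}$ away from the diagonal, while at $z=w$ the singularity is only logarithmic (from the $(1-t)^{-1}$ factor). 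Differentiating under the integral sign is then justified, and Behm's construction of $G_\alpha$ as the Green function for $\overline{L_\alpha}$ with vanishing Dirichlet data yields the equation $-\overline{L_\alpha}(G_\alpha[g]) = g$.

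Next I would show that $G_\alpha[g](re^{it}) \to 0$ in $L^1(\mathbb{T})$, in the same distributional sense in which $u(re^{it}) \to f$. Splitting $\mathbb{D}$ into a small disk around a given $w$ and its complement, the contribution from the disk shrinks to zero by absolute integrability of the logarithmic singularity, while the exterior contribution is controlled by the factor $(1-|w|^2)^{\alpha+1}$ extracted from the kernel bound, tested against the $L^1$ hypothesis on $(1-|z|^2)^{\alpha+1}g$. Dominated convergence delivers the boundary vanishing. Setting $U := u - G_\alpha[g]$, we then have $\overline{L_\alpha} U = 0$ on $\mathbb{D}$ and $U(re^{it}) \to f$ as $r \to 1^-$.

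Finally, I would invoke the Olofsson--Wittsten representation theorem (after adapting from the $\alpha$-harmonic setting to the $\overline{\alpha}$-harmonic setting by complex conjugation) applied to $U$, obtaining $U(w) = v(w)$ with $v$ the $\alpha$-Poisson-type integral in the statement. The main obstacle is the second step: coordinating the mixed weighted decay and the diagonal singularity of $G_\alpha(z,w)$ with the precise hypothesis $(1-|z|^2)^{\alpha+1}g \in L^1$, so that the vanishing of $G_\alpha[g]$ on $\partial\mathbb{D}$ occurs in a form compatible with the radial $L^1$-convergence hypothesis imposed on $u$.
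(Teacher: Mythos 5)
The paper does not prove this theorem: it is quoted from Chen and Kalaj \cite{ChKal}, with the only indication of method being that they ``combined the representation theorem given by Olofsson and Wittsten with the one given by Behm.'' Your proposal follows exactly that strategy (subtract the Green potential, verify it solves the equation with vanishing boundary trace via the kernel estimates, and identify the remainder with the $\alpha$-Poisson integral by Olofsson--Wittsten), so it is consistent with the cited proof's approach and raises no objections.
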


In \cite{lip1}   Chen  provided  the boundary characterizations of a Lipschitz continuous  $\overline{\alpha}$-harmonic mapping as follows

Define   $\underline{f}(t)= f(e^{it})$  and
\be
S[f](w)=\frac{1}{\pi}\int_0^{2\pi} \frac{(1- |w|^2)^\alpha}{(1-z\overline{w})^\alpha}   \frac{Im (w \overline{z})}{|z-w|^2} \underline{f}(t) dt
\ee
where $z=e^{it}$.

\begin{thm}[\cite{lip1}]\label{ChenLip} Assume that $g \in C(\overline{\mathbb{D}})$ and $u\in V_{\mathbb{D}\rightarrow \Omega}[g]$ with the representation $u(w) = v(w) + G_{\alpha}[g](w)$. If $\alpha \geqslant 0$,
then the following conditions are equivalent:
\begin{itemize}
\item[$(a)$] $u$ is a (K, K')-quasiconformal mapping and $|\frac{\partial }{\partial r}v | \leq  L$ on $\mathbb{D}$, and $L$ is a constant.
\item[$(b)$] $u$ is Lipschitz continuous with the Euclidean metric.
\item[$(c)$] $v$ is Lipschitz continuous with the Euclidean metric.

\item[$(d)$]  $f$ is absolutely continuous on  $\mathbb{T}$,   $f'\in L^\infty$  and  $S[\underline{f}']$   is bounded on  $\mathbb{D}$.

\end{itemize}
\end{thm}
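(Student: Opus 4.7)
The plan is to organize the four-way equivalence around condition (c) as a central pivot, proving (b)$\iff$(c), (c)$\iff$(d), (b)$\implies$(a), and (a)$\implies$(b). A preparatory lemma will be needed: when $g\in C(\overline{\mathbb{D}})$, the Green potential $G_\alpha[g]$ is Lipschitz continuous on $\overline{\mathbb{D}}$. This is obtained by differentiating under the integral sign in the representation $G_\alpha[g](w)=\iint_\mathbb{D} G_\alpha(z,w)g(z)\,dxdy$ and verifying that $\iint_\mathbb{D}|\nabla_w G_\alpha(z,w)|\,dxdy$ is uniformly bounded in $w$, using Behm's explicit formula for $G_\alpha$, the hypothesis $\alpha\ge 0$, and the mild logarithmic-type singularity of $h(r)$ at $r=1$.

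With this lemma, (b)$\iff$(c) is immediate from the decomposition $u=v+G_\alpha[g]$, since the Lipschitz second summand passes freely between $u$ and $v$. For (c)$\iff$(d) I would exploit $v=P_\alpha[f]$: differentiating under the integral and integrating by parts once in $\theta$ converts $\partial_t v(re^{it})$ into a Poisson-type integral of $\underline{f}'$, which is bounded as soon as $f$ is absolutely continuous and $\underline{f}'\in L^\infty$. The radial derivative requires more care: explicit differentiation of $P_\alpha(re^{i(t-\theta)})$ in $r$, combined with algebraic rearrangement of the factor $(1-\bar z w)^{-\alpha-1}$, yields after one integration by parts in $\theta$ the operator $S[\underline{f}']$ plus a manifestly bounded remainder depending on $\underline{f}'$. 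Thus Lipschitz continuity of $v$ is equivalent to the simultaneous boundedness of $\partial_t v$ (controlled by $\underline{f}'\in L^\infty$) and $\partial_r v$ (controlled by $S[\underline{f}']$ bounded), matching (d) precisely.

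The equivalence (a)$\iff$(b) is then handled in polar coordinates. For (b)$\implies$(a), Lipschitz continuity gives $|Du|^2\in L^\infty(\mathbb{D})$, so the $(K,K')$-quasiconformality inequality $|Du|^2\le K J_u+K'$ holds with $K=1$ and $K'=\|Du\|_\infty^2$ (using $J_u\ge 0$ since $u$ is a sense-preserving diffeomorphism), and the bound $|\partial_r v|\le L$ follows from (c), which is equivalent to (b). For (a)$\implies$(b), the key observation is that $|\partial_r v|\le L$ together with the Lipschitz Green-potential estimate yields $|\partial_r u|\le R$ for some constant $R$. Using the elementary bound $J_u\le |\partial_r u|\cdot r^{-1}|\partial_\theta u|$ (a $2\times 2$ determinant inequality), the quasiconformality inequality in polar coordinates becomes
\[
|\partial_r u|^2+r^{-2}|\partial_\theta u|^2\le K|\partial_r u|\cdot r^{-1}|\partial_\theta u|+K',
\]
a quadratic in $r^{-1}|\partial_\theta u|$ whose solution gives $r^{-1}|\partial_\theta u|\le \tfrac12\bigl(KR+\sqrt{K^2R^2+4K'}\bigr)$. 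Combined with $|\partial_r u|\le R$, this bounds the Euclidean gradient $|\nabla u|=\sqrt{|\partial_r u|^2+r^{-2}|\partial_\theta u|^2}$ on $\mathbb{D}$, proving (b).

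The main obstacle will be the derivative calculation in (c)$\iff$(d). The kernel $P_\alpha$ is complex valued and depends nontrivially on $\alpha$, so isolating the $S[\underline{f}']$ structure inside $\partial_r v$ demands careful bookkeeping in the integration by parts: one must show that every term other than the $S$-integral is automatically bounded by $\|\underline{f}'\|_\infty$. A secondary technical point is the Green-potential lemma near the boundary, where the prefactor $(1-\bar z w)^\alpha$ must be shown not to worsen the singularity of $\nabla_w h(q(z,w))$ enough to destroy the integrability needed to make $G_\alpha[g]$ Lipschitz up to $\partial\mathbb{D}$.
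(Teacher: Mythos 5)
This theorem is quoted in the paper from Chen's article \cite{lip1} and is not proved here, so there is no in-paper argument to compare against line by line. That said, your reconstruction is sound and matches the strategy that the surrounding text attributes to \cite{lip1}: your preparatory lemma (Lipschitz continuity of $G_\alpha[g]$ for $g\in C(\overline{\mathbb{D}})$, via uniform boundedness of $\iint_{\mathbb{D}}|\nabla_w G_\alpha(z,w)|\,dx\,dy$) is exactly the ``Lemma 3.4'' of \cite{lip1} that the present paper generalizes in its Theorem \ref{grin-lip}, using precisely the gradient estimates (\ref{G-po-w})--(\ref{G-po-w-konj}) imported from \cite{lip1}; the reduction (b)$\iff$(c) by absorbing the Lipschitz Green potential, the identification of $S[\underline{f}']$ inside $\partial_r v$ for (c)$\iff$(d), and the quadratic-inequality argument in polar coordinates for (a)$\iff$(b) (using $J_u\leqslant |\partial_r u|\cdot r^{-1}|\partial_\theta u|$ and solving for $r^{-1}|\partial_\theta u|$) are all the standard components of this circle of results. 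Two small points to watch: in (c)$\Rightarrow$(d) you should justify that $P_\alpha[\underline{f}']$ is bounded when $\underline{f}'\in L^\infty$, which requires the uniform $L^1$ bound on the complex kernel $P_\alpha(re^{i\cdot})$ valid for $\alpha\geqslant 0$; and in (b)$\Rightarrow$(a) the choice $K=1$, $K'=\|Du\|_\infty^2$ needs $J_u\geqslant 0$, which comes from the sense-preserving hypothesis built into $V_{\mathbb{D}\rightarrow\Omega}[g]$. Neither is a gap, only a step worth making explicit.
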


In order to prove the main result of this paper we will need to prove two refinement of the result above.

\subsection{Refinement of part $(d)$ in Theorem \ref{ChenLip}}\hfill

As we can see in \cite{AK-MM}, for $p\in(0,\infty]$, the {\it generalized Hardy space
$\mathcal{H}^{p}_{\mathcal{G}}(\mathbb{D})$} consists of all
measurable  functions from $\mathbb{D}$ to $\mathbb{C}$ such that
$M_{p}(r,f)$ exists for all $r\in(0,1)$, and $ \|f\|_{p}<\infty$,
where
$$M_{p}(r,f)=\left(\frac{1}{2\pi}\int_{0}^{2\pi}|f(re^{i\theta})|^{p}\,d\theta\right)^{\frac{1}{p}}
$$
and
$$\|f\|_{p}=
\begin{cases}
\displaystyle\sup \{M_{p}(r,f):\; 0<r <1\}
& \mbox{if } p\in(0,\infty),\\
\displaystyle\sup\{|f(z)|:\; z\in\mathbb{D}\} &\mbox{if } p=\infty.
\end{cases}$$

The classical {\it Hardy space $\mathcal{H}^{p}(\mathbb{D})$}  (resp. $h^{p}(\mathbb{D})$) is the set of  all  elements of $\mathcal{H}^{p}_{\mathcal{G}}(\mathbb{D})$ which are analytic (resp. harmonic) on $\D$.

\begin{defi}[\cite{topic}]
    Let $\psi\in L^1([0,2\pi])$. Then
$$H(\psi)(\varphi)=- \frac{1}{2\pi}\int\limits_{0_+}^{\pi}
\frac{\psi(\varphi+t)-\psi(\varphi-t)}{\tan t/2}\dd t,$$ denotes the
Hilbert transformations of $\psi.$
\end{defi}
Recall that   $\underline{h}(t)= h(e^{it})$. The
following property of the Hilbert transform is also sometimes taken  as
the definition:
\begin{thm}[\cite{topic}]
If $u=P[\psi]$ and $v$ is the harmonic conjugate of $u$, then
$\underline{v}(t)= H(\psi)(t)$ a.e.
\end{thm}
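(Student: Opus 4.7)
The plan is to represent $v$ via the conjugate Poisson kernel and then compare with the defining integral of $H(\psi)$. Let $Q(r,\theta)=\frac{2r\sin\theta}{1-2r\cos\theta+r^{2}}$ be the conjugate Poisson kernel, i.e.\ the imaginary part of the Schwarz kernel $\frac{e^{i\theta}+z}{e^{i\theta}-z}$ evaluated at $z=re^{i\varphi}$ (reindexed by $\theta\mapsto\varphi-\theta$). Since
$$F(z):=\frac{1}{2\pi}\int_{0}^{2\pi}\frac{e^{i\theta}+z}{e^{i\theta}-z}\,\psi(\theta)\dd\theta$$
is analytic in $\ID$ with $\mathrm{Re}\,F=P[\psi]=u$ and $\mathrm{Im}\,F(0)=0$, the harmonic conjugate of $u$ (up to an additive real constant) is given by
$$v(re^{i\varphi})=\frac{1}{2\pi}\int_{0}^{2\pi}Q(r,\varphi-\theta)\,\psi(\theta)\dd\theta=-\frac{1}{2\pi}\int_{0}^{\pi}Q(r,t)\bigl[\psi(\varphi+t)-\psi(\varphi-t)\bigr]\dd t,$$
where the last equality uses the oddness $Q(r,-t)=-Q(r,t)$ after splitting the integral over $(-\pi,0)$ and $(0,\pi)$.

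A direct calculation yields the identity
$$Q(r,t)-\cot(t/2)=-\frac{(1-r)^{2}\sin t}{(1-2r\cos t+r^{2})(1-\cos t)},$$
so $Q(r,t)\to \cot(t/2)=1/\tan(t/2)$ pointwise on $(0,\pi)$ and uniformly on $[\delta,\pi]$ for every $\delta>0$. I would split the representation of $v(re^{i\varphi})$ at some small $\delta>0$. On $[\delta,\pi]$ the uniform convergence combined with $\psi\in L^{1}(\T)$ forces the tail integral to converge, as $r\to 1^{-}$, to the corresponding truncated integral of $1/\tan(t/2)$. For the piece over $(0,\delta)$, use the uniform bound $|Q(r,t)|\leq C/t$ together with the Lebesgue differentiation theorem at $\varphi$: at almost every point the symmetric averages $\frac{1}{\delta}\int_{0}^{\delta}|\psi(\varphi+t)-\psi(\varphi-t)|\dd t$ tend to $0$, and the odd cancellation built into $Q(r,\cdot)$ forces the near-origin contribution to be $o(1)$ as $\delta\to 0$, uniformly in $r\in(1/2,1)$.

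Letting first $r\to 1^{-}$ and then $\delta\to 0$ gives $v(re^{i\varphi})\to H(\psi)(\varphi)$ at every Lebesgue point of $\psi$ at which the principal value defining $H(\psi)(\varphi)$ exists, hence almost everywhere. The main obstacle is precisely the simultaneous control of the singularity at $t=0$ in the approximating kernel $Q(r,\cdot)$ and in the limiting kernel $\cot(t/2)$: passing from convergence in measure (or in $L^{p}$ norm for $p>1$) to the pointwise a.e.\ statement for general $\psi\in L^{1}(\T)$ requires the classical weak-type $(1,1)$ estimate of Kolmogorov for the maximal conjugate function, which, together with a density argument via continuous $\psi$ (for which the limit is elementary), is the standard route and is the one carried out in \cite{topic}.
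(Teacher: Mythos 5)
The paper itself gives no proof of this statement: it is quoted from the monograph \cite{topic} as a known classical fact about the conjugate function, so there is nothing in the text to compare against line by line. Your overall route is the standard one: the Schwarz integral, the conjugate Poisson kernel $Q(r,t)$, the reduction by oddness to an integral of $\psi(\varphi+t)-\psi(\varphi-t)$ over $(0,\pi)$, and the comparison of $Q(r,t)$ with $\cot(t/2)$. The algebra is right (the identity for $Q(r,t)-\cot(t/2)$ and the sign conventions match the paper's definition of $H$), and you correctly identify that the whole difficulty is the a.e.\ statement for general $\psi\in L^1(\T)$.

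The one step that fails as written is the treatment of the piece over $(0,\delta)$. The bound $|Q(r,t)|\leq C/t$ combined with the Lebesgue-point property of $\varphi$ does not control $\int_0^{\delta}t^{-1}\lvert\psi(\varphi+t)-\psi(\varphi-t)\rvert\,\dd t$: smallness of the symmetric averages $\delta^{-1}\int_0^{\delta}\lvert\psi(\varphi+t)-\psi(\varphi-t)\rvert\,\dd t$ says nothing about integrability against $1/t$, and this integral need not be finite at a.e.\ point even for continuous $\psi$; the principal value $H(\psi)(\varphi)$ exists a.e.\ only because of cancellation, not absolute convergence, so no estimate that passes through $\lvert Q(r,t)\rvert$ and $\lvert\psi(\varphi+t)-\psi(\varphi-t)\rvert$ can close this step. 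The standard repair, which your final paragraph names but does not actually carry out, is to compare $v(re^{i\varphi})$ not with a truncation at a fixed $\delta$ but with the truncated conjugate integral at the $r$-dependent scale $\varepsilon=1-r$: one checks that $\bigl|Q(r,t)-\cot(t/2)\chi_{\{t>1-r\}}\bigr|$ is dominated by an even, decreasing, integrable kernel, so the difference of the two expressions is bounded by a constant times the Hardy--Littlewood maximal function of $\psi$ at $\varphi$ and tends to $0$ a.e.; the a.e.\ convergence of the truncated integrals themselves (i.e.\ the existence of the principal value) is then exactly what Kolmogorov's weak-$(1,1)$ estimate for the maximal truncated Hilbert transform, plus a density argument, supplies. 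Note also that the dense class should be Lipschitz or $C^1$ functions rather than merely continuous ones, since for continuous $\psi$ the pointwise existence of the principal value is not elementary. With these substitutions your outline becomes the classical proof.
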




Note that, if $\psi$  is  $2 \pi$-periodic, absolutely continuous
on $[0,2\pi]$ (and therefore $\psi' \in L^1[0,2\pi]$), then
\begin{equation*}\label{poisson1}
\frac{\partial h}{\partial\theta} =P[\psi'].
\end{equation*}
Hence,  since   $r \frac{\partial}{\partial r} h$ is the
harmonic conjugate of $\frac{\partial}{\partial\theta} h$,  we find
\begin{align}\label{hilbert}
r \frac{\partial h}{\partial r}  &=P[H(\psi')],\quad \\
\left(\frac{\partial h}{\partial r} \right)^*(e^{i \theta})&=
H(\psi^\prime)(\theta)\quad {\rm a.e.}\,
\end{align}
Using the above outline we can derive:
\begin{thm}\label{alpha-har-lip}
Let  $h$ be harmonic  on $\mathbb{U}$ and  $h\in h^1(\mathbb{U})$. Then   $h$ is Lip  iff  $\underline{h}^\prime \in L^\infty$  and  $H(\underline{h}^\prime) \in L^\infty$.

Let  $h$  be gradient  harmonic or  $\alpha$-harmonic, $\alpha >0$, on $\mathbb{U}$ and  $h\in h^1(\mathbb{U})$. Then   $h$ is Lip  iff  $\underline{h}^\prime \in L^\infty$.
\end{thm}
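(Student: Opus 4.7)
The plan is to convert Lipschitz continuity of $h$ into uniform boundedness of its Euclidean gradient on $\mathbb{U}$, and then to move between the gradient and the boundary data using the relations in \eqref{hilbert}, together with the fact that the Poisson integral carries $L^\infty(\mathbb{T})$ into $L^\infty(\mathbb{U})$ with preserved sup-norm.

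For the first statement, in the forward direction a Lipschitz harmonic $h$ has Lipschitz boundary trace $\underline h$, hence $\underline h$ is absolutely continuous and $\underline h' \in L^\infty(\mathbb{T})$. Lipschitz continuity of $h$ also forces $|\partial_r h|$ to be uniformly bounded, and the a.e.\ identity in \eqref{hilbert} then gives $H(\underline h')=(\partial_r h)^* \in L^\infty(\mathbb{T})$. In the reverse direction, suppose $\underline h' \in L^\infty$ and $H(\underline h') \in L^\infty$; applying the Poisson integral to both identities in \eqref{hilbert}, we obtain $\partial_\theta h$ and $r\partial_r h$ in $L^\infty(\mathbb{U})$. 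Near $\partial\mathbb{U}$ the factor $r$ is bounded below, so $\partial_r h$ is bounded there, while near the origin interior regularity of harmonic functions controls $\nabla h$ directly (the bounded harmonic function $\partial_\theta h$ vanishes at $0$, so $r^{-1}\partial_\theta h$ is bounded by Cauchy-type estimates). Hence $|\nabla h|$ is bounded on $\mathbb{U}$, and convexity of $\mathbb{U}$ yields Lipschitz continuity via the mean value theorem.

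For the second statement, the structural hypothesis on $h$---being gradient harmonic, or $\alpha$-harmonic with $\alpha>0$---should render the Hilbert-transform bound redundant. The plan is to appeal to Theorem \ref{AK-MM-thm} of \cite{AK-MM}, which upgrades $L^\infty$-boundedness of $\underline h'$ to Lipschitz continuity of $h$ within these distinguished classes; the forward implication is inherited verbatim from the harmonic case above.

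The main obstacle is the boundary identification $(\partial_r h)^* = H(\underline h')$ under the sole hypotheses $h\in h^1(\mathbb{U})$ and $h$ Lipschitz: one must carefully justify the a.e.\ nontangential boundary limit, the absolute continuity of $\underline h$, and the interchange of differentiation with the Poisson integral in this generality. Once this identification is cleanly in place, the remainder is routine Poisson/Hilbert bookkeeping, together with a direct invocation of Theorem \ref{AK-MM-thm} to close out the second statement.
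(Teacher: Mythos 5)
Your proposal takes essentially the same route as the paper: the forward and reverse implications of the first part rest on the Poisson/Hilbert identities in \eqref{hilbert} (reducing Lipschitz continuity to boundedness of $\partial_\theta h$ and $r\partial_r h$, with an interior estimate near the origin), and the second part is closed by invoking Theorem \ref{AK-MM-thm} with $p=\infty$. The paper itself only sketches this argument (``Using the above outline we can derive\dots'' and ``a direct consequence of'' Theorem \ref{AK-MM-thm}), so your write-up, including the caveat about justifying $(\partial_r h)^* = H(\underline{h}')$ and the absolute continuity of $\underline{h}$, is if anything more explicit than the original.
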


The second part of the last theorem is a direct consequence of the first author's result, together with A. Khalfallah, which is stated below.
\begin{thm}[\cite{AK-MM}] \label{AK-MM-thm}
Let $\alpha \in (-1,\infty)$ with $\alpha\not=0$ and let $f=\mathcal{P}_\alpha[F]$ and $F$ is absolute continuous  such that $\dot{F}\in L^p$ with $1\leqslant p\leqslant \infty$.
\begin{enumerate}
    \item  If $\alpha>0$, then
 $\frac{\partial}{\partial \zb}f$ and $\frac{\partial}{\partial z}f$ are in  $\mathcal{H}_{\mathcal{G}}^p(\D)\subset L^p(\D)$.
 \item
If $\alpha\in (-1,0)$, then $\frac{\partial}{\partial \zb}f$ and $\frac{\partial}{\partial z}f$ are in $L^p(\D)$ for $p<- \frac{1}{\alpha}$.

\item For $\alpha \in (-1,0)$ and  $p\geqslant -\frac{1}{\alpha}$ there exists $f$ an  $\alpha$-harmonic function such that $\frac{\partial}{\partial \zb}f$   and $\frac{\partial}{\partial z}f$ $\not\in L^p(\D)$. Moreover, $\frac{\partial}{\partial \zb}f$   and $\frac{\partial}{\partial z}f$ $\not\in \mathcal{H}_{\mathcal{G}}^1(\D)$.
\end{enumerate}
  \end{thm}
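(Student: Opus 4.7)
The plan is to derive explicit integral representations of $\partial_z f$ and $\partial_{\overline{z}} f$ as operators acting on $\dot F$, and to bound them via Forelli--Rudin type estimates. Starting from $f(z)=\frac{1}{2\pi}\int_{0}^{2\pi}P_\alpha(ze^{-i\theta})F(e^{i\theta})\,d\theta$ with $P_\alpha(w)=\frac{(1-|w|^2)^{\alpha+1}}{(1-w)(1-\overline{w})^{\alpha+1}}$, I differentiate under the integral sign and employ the identity
\[
\tfrac{\partial}{\partial\theta}[P_\alpha(ze^{-i\theta})]=-ize^{-i\theta}(\partial_w P_\alpha)(ze^{-i\theta})+i\overline{z}e^{i\theta}(\partial_{\overline{w}}P_\alpha)(ze^{-i\theta})
\]
to rewrite the integrands for $\partial_z f$ and $\partial_{\overline{z}} f$ as linear combinations of $\partial_\theta P_\alpha(ze^{-i\theta})$ and a lower-order kernel. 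Integration by parts in $\theta$, justified by absolute continuity of $F$ and $2\pi$-periodicity (so that the boundary terms cancel), transfers the derivative onto $F$, producing
\[
\partial_z f(z)=\tfrac{1}{2\pi}\int_{0}^{2\pi}K^{(1)}_\alpha(z,e^{i\theta})\dot F(\theta)\,d\theta,\quad \partial_{\overline{z}}f(z)=\tfrac{1}{2\pi}\int_{0}^{2\pi}K^{(2)}_\alpha(z,e^{i\theta})\dot F(\theta)\,d\theta,
\]
where each kernel obeys the pointwise bound $|K^{(j)}_\alpha(z,e^{i\theta})|\leqslant C\bigl[(1-|z|^2)^{\alpha+1}|1-\overline{z}e^{i\theta}|^{-\alpha-2}+(1-|z|^2)^{\alpha}|1-\overline{z}e^{i\theta}|^{-\alpha-1}\bigr]$.

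To obtain parts (1) and (2), I invoke the Forelli--Rudin asymptotic $\int_{0}^{2\pi}|1-\overline{z}e^{i\theta}|^{-s}\,d\theta\asymp(1-|z|^2)^{1-s}$ for $s>1$ (and bounded for $s<1$). When $\alpha>0$, the $\theta$-integral of each kernel term is bounded uniformly in $z\in\mathbb{D}$, because the $(1-|z|^2)^{\alpha+1}$ and $(1-|z|^2)^\alpha$ prefactors exactly compensate the growth coming from the boundary-singularity integrals; Young's convolution inequality in $\theta$ then gives $M_p(r,\partial f)\leqslant C\|\dot F\|_{L^p}$ uniformly in $r<1$, so $\partial f\in\mathcal{H}^p_{\mathcal{G}}(\mathbb{D})$, and the trivial inclusion $\mathcal{H}^p_{\mathcal{G}}\subset L^p(\mathbb{D})$ finishes part (1). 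For $\alpha\in(-1,0)$ the prefactor $(1-|z|^2)^\alpha$ blows up as $|z|\to 1^-$, so no $z$-uniform $\theta$-integrability survives; instead, I estimate $\|\partial f\|_{L^p(\mathbb{D})}$ by integrating the dual quantity $\|K^{(j)}_\alpha(\cdot,e^{i\theta})\|_{L^p(\mathbb{D})}$ against $\dot F$ over $\mathbb{T}$, and a second Forelli--Rudin calculation on the disk reveals that the resulting area-integral is finite exactly when $p<-1/\alpha$.

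For the sharpness assertion (3), I exhibit a concrete $\alpha$-harmonic counterexample. Taking $F$ absolutely continuous with $\dot F=\chi_I\in L^\infty$ for a small arc $I$ around $\zeta=1$, I set $f=\mathcal{P}_\alpha[F]$ and use the explicit behavior of $P_\alpha$ and its derivatives near the boundary singularity $w=1$ to derive $|\partial_z f(re^{i\phi})|\geqslant c(1-r)^\alpha$ on a macroscopic arc of angles $\phi$ as $r\to 1^-$. Since $\alpha<0$, the radial integral $\int_0^1(1-r)^{\alpha p}\,r\,dr$ diverges whenever $p\geqslant-1/\alpha$, so $\partial_z f\notin L^p(\mathbb{D})$, and the same circular-average lower bound forces $M_1(r,\partial_z f)\to\infty$, ruling out $\mathcal{H}^1_{\mathcal{G}}$ membership. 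The principal obstacle is the sharp exponent tracking in part (2): the kernel contains two competing singular terms of different orders, and one must verify that neither imposes a restriction tighter than $p<-1/\alpha$. A secondary technical point is rigorously justifying the integration by parts for merely absolutely continuous $F$, which is handled by smooth approximation in $W^{1,p}(\mathbb{T})$ and passage to the limit in both sides.
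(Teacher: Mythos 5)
This theorem is quoted from \cite{AK-MM}; the present paper gives no proof of it, so I can only measure your outline against the argument in that source, which is indeed of the type you describe: differentiate the kernel, integrate by parts in $\theta$ to land on $\dot F$, then apply Forelli--Rudin estimates and Minkowski/Young in the angular variable. Your target kernel bound
$(1-|z|^2)^{\alpha+1}|1-\overline{z}e^{i\theta}|^{-\alpha-2}+(1-|z|^2)^{\alpha}|1-\overline{z}e^{i\theta}|^{-\alpha-1}$
is the correct one, and your treatment of part (1) via Young's inequality in $\theta$ is sound. But there are two genuine gaps. First, the chain-rule identity you invoke controls only the angular combination $\partial_\varphi f=i(z\partial_z-\overline{z}\partial_{\overline z})f=P_\alpha[\dot F]$; it does not by itself separate $\partial_zf$ from $\partial_{\overline z}f$, and the complementary piece is \emph{not} automatically ``lower-order.'' Differentiating directly one finds the exact identity $\partial_{\overline z}\bigl[P_\alpha(ze^{-i\theta})\bigr]=(\alpha+1)(1-|z|^2)^{\alpha}e^{i\theta}(1-\overline{z}e^{i\theta})^{-(\alpha+2)}$, which against a merely bounded $F$ gives only $O\bigl((1-|z|^2)^{-1}\bigr)$; the lower-order kernel $(1-|z|^2)^{\alpha}|1-\overline{z}e^{i\theta}|^{-(\alpha+1)}$ appears only after a \emph{second} integration by parts exploiting that $e^{i\theta}(1-\overline{z}e^{i\theta})^{-(\alpha+2)}$ is an exact $\partial_\theta$-derivative of $\tfrac{c}{\overline{z}}(1-\overline{z}e^{i\theta})^{-(\alpha+1)}$. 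This step is where the specific structure of $P_\alpha$ is used and where the restriction behind the theorem lives (compare Theorem \ref{alpha-har-lip}: for plain harmonic functions, $\underline h'\in L^\infty$ alone does not give Lipschitz continuity, because the conjugate-function piece is not lower order). Your outline never exhibits this, so the decomposition ``$\partial_\theta$-part plus lower-order kernel'' is asserted rather than proved, and it is precisely the nontrivial point.

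Second, your method for part (2) is quantitatively wrong as stated. Estimating $\|\partial f\|_{L^p(\mathbb{D})}$ by integrating $\|K^{(j)}_\alpha(\cdot,e^{i\theta})\|_{L^p(\mathbb{D})}$ against $\dot F$ over $\mathbb{T}$ (Minkowski in the boundary variable) requires
$\iint_{\mathbb{D}}(1-|z|^2)^{\alpha p}|1-\overline{z}e^{i\theta}|^{-(\alpha+1)p}\,\dd x\dd y<\infty$, and the two-dimensional Forelli--Rudin estimate makes this finite only when $(\alpha+1)p<\alpha p+2$, i.e.\ $p<2$, in addition to $\alpha p>-1$. For $\alpha\in(-1/2,0)$ one has $-1/\alpha>2$, so this route fails to cover $2\leqslant p<-1/\alpha$ and does not prove the stated range. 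The correct route is the same one you use in part (1): Young in $\theta$ gives $M_p(r,\partial f)\preceq(1-r^2)^{\alpha}\|\dot F\|_{L^p}$, and then $\int_0^1(1-r)^{\alpha p}\,\dd r<\infty$ exactly when $p<-1/\alpha$. Your sketch of the counterexample in part (3) is plausible, provided you verify that the limiting angular profile $\varphi\mapsto\int_I(1-e^{i(\theta-\varphi)})^{-(\alpha+1)}\dd\theta$ is nonvanishing on an arc of fixed length, so that the lower bound $c(1-r)^{\alpha}$ really holds on a set of angles of measure bounded below as $r\to1^-$.
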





\subsection{Refinement of the condition on $g=-\overline{L_\alpha}u$ in Theorem \ref{ChenLip}}\hfill

In this subsection we will prove that instead of $g\in C(\overline{\mathbb{D}})$ we can use assume that $g\in C(\mathbb{D})$ can be such that $|g(z)|\leqslant M(1-|z|^2)^{-\alpha}, z\in\mathbb{D}$, in oreder to prove Lipshitz continuity of $\overline{\alpha}$-Green integral $G_\alpha[g]$ of function $g$. This fact will play an important part in the proof of our main result.

The following two estimates can be obtained by direct investigation of the Green function $G_\alpha$, and can be found in \cite{lip1}.

\begin{align}\label{G-po-w}
    2\pi\left|\frac{\partial}{\partial w}G_\alpha(z,w)\right|&\leqslant\alpha C_\alpha |1-\overline{z}w|^{\alpha-1}\left(1{-}\left|\frac{z{-}w}{1{-}\overline{w}z}\right|^2\right)^{\alpha{+}1}\left(1{-}\log \left|\frac{z{-}w}{1{-}\overline{w}z}\right|^2\right) \notag \\
    & +\frac{(1-|z|^2)^{\alpha+1}(1-|w|^2)^{\alpha}}{2|1-z\overline{w}|^{\alpha+1}|z-w|},\vspace{-2mm}
\end{align}
\begin{align}\label{G-po-w-konj}
    2\pi\left|\frac{\partial}{\partial \overline{w}}G_\alpha(z,w)\right|\leqslant\frac{(1-|z|^2)^{\alpha+1}(1-|w|^2)^{\alpha}}{2|1-\overline{z}w|^{\alpha+1}|z-w|}.
\end{align}
In order to start with our work, we will prove the following two tehnical lemmas.
\begin{lem}\label{i2-eq}
    If $\beta >1$ then
    $$\int_0^{2\pi}\frac{\dd t}{|1-r\rho e^{i t}|^\beta}\preceq \frac{1}{|1-r\rho|^{\beta-1}}$$
    for $0<r,\rho<1$.
\end{lem}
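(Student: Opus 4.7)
The plan is to reduce the problem to a single real parameter $s:=r\rho\in(0,1)$, since the integrand depends only on $|1-se^{it}|^\beta$ and the right-hand side is $(1-s)^{-(\beta-1)}$. Setting
$$I(s):=\int_0^{2\pi}\frac{\dd t}{|1-se^{it}|^\beta},$$
the target reduces to proving $I(s)\preceq (1-s)^{-(\beta-1)}$ uniformly in $s\in(0,1)$.

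First I would dispose of the trivial regime $s\le 1/2$: there $|1-se^{it}|\ge 1-s\ge 1/2$, so $I(s)$ is bounded above by a constant, and $(1-s)^{-(\beta-1)}\ge 1$, so the inequality holds. The substantial case is $s\in[1/2,1)$. For that range I would invoke the classical identity $|1-se^{it}|^2=(1-s)^2+4s\sin^2(t/2)$ together with the elementary bound $\sin(t/2)\ge t/\pi$ valid for $|t|\le\pi$, to obtain the quadratic lower bound
$$|1-se^{it}|^2\ge (1-s)^2+\tfrac{2t^2}{\pi^2},\qquad |t|\le\pi,$$
where the hypothesis $s\ge 1/2$ absorbs the prefactor $4s$ into a uniform constant.

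The finishing step is the scaling substitution $u=t/(1-s)$, which produces
$$I(s)\le \int_{-\pi}^{\pi}\frac{\dd t}{\bigl((1-s)^2+\tfrac{2}{\pi^2}t^2\bigr)^{\beta/2}}=\frac{1}{(1-s)^{\beta-1}}\int_{-\pi/(1-s)}^{\pi/(1-s)}\frac{\dd u}{\bigl(1+\tfrac{2}{\pi^2}u^2\bigr)^{\beta/2}},$$
and the remaining integral is dominated by $\int_{\R}\bigl(1+\tfrac{2}{\pi^2}u^2\bigr)^{-\beta/2}\dd u$, which is finite precisely because $\beta>1$. There is no serious obstacle here; the only subtle point is that the hypothesis $\beta>1$ is sharp, being exactly the threshold for convergence of this limiting Cauchy-type tail integral, while the two-regime split in $s$ is what makes the constant in the quadratic lower bound uniform.
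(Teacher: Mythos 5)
Your proof is correct and follows essentially the same route as the paper's: the identity $|1-se^{it}|^2=(1-s)^2+4s\sin^2(t/2)$, a quadratic lower bound for the sine term, the scaling substitution $u=t/(1-s)$, and convergence of the resulting Cauchy-type tail integral precisely because $\beta>1$. Your only deviation is the explicit split into $s\le 1/2$ and $s\ge 1/2$, which in fact repairs a small uniformity issue the paper glosses over, since its bound $4r\rho\sin^2(t/2)\ge c_1t^2$ cannot hold with a constant $c_1$ independent of $r\rho$ as $r\rho\to 0$.
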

\begin{proof}
    \begin{align*}
        \int_0^{2\pi}\frac{\dd t}{|1-r\rho e^{i t}|^\beta}&=2\int_0^{\pi}\frac{\dd t}{((1-r\rho)^2+4r\rho\sin^2\frac{t}{2})^{\beta/2}} \\ \notag
        &\leqslant \int_0^\pi\frac{\dd t}{((1-r\rho)^2+c_1t^2)^{\beta/2}}\leqslant \left|t=(1-r\rho)u\right| \\ \notag
        &\leqslant \int_0^{\pi/(1-r\rho)}\frac{(1-r\rho)\dd u}{(1-r\rho)^\beta(1+c_1u^2)^{\beta/2}} \\ \notag
        & \leqslant \frac{1}{(1-r\rho)^{\beta-1}}\int_0^{\infty}\frac{\dd u}{(1+c_1u^2)^{\beta/2}}.
    \end{align*}
since the last integral converges, we have desired result.
\end{proof}

\begin{lem}\label{i1-eq}
There exists $c_2>0$ such that
$$M_1(r)=\iint_\mathbb{D}\frac{\dd x\dd y}{|z-r|}\leqslant c_2$$
for every $0<r<1$.
\end{lem}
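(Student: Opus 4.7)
The plan is to reduce the integral to polar coordinates centred at the singularity and exploit the fact that the integrand has only a mild (integrable) singularity at $z=r$. Specifically, since $r\in(0,1)$ and the unit disk $\mathbb{D}$ has diameter $2$, the disk $\mathbb{D}$ is contained in the Euclidean ball $B(r,2)$ centred at $r$. Therefore
\[
M_1(r)=\iint_{\mathbb{D}}\frac{\dd x\,\dd y}{|z-r|}\leqslant \iint_{B(r,2)}\frac{\dd x\,\dd y}{|z-r|}.
\]

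Next I would introduce polar coordinates $z-r=\rho e^{i\theta}$, so that $\dd x\,\dd y=\rho\,\dd\rho\,\dd\theta$ and $|z-r|=\rho$. The radial factor cancels against the singularity, leaving
\[
\iint_{B(r,2)}\frac{\dd x\,\dd y}{|z-r|}=\int_0^{2\pi}\!\!\int_0^{2}\dd\rho\,\dd\theta=4\pi,
\]
so we may take $c_2=4\pi$, which is independent of $r$.

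There is no real obstacle here; the only point worth noting is that the bound must be \emph{uniform} in $r\in(0,1)$, which is why one enlarges the region of integration to a disk centred at the singularity $r$ rather than trying to estimate directly on $\mathbb{D}$. Any fixed radius $R$ with $\mathbb{D}\subset B(r,R)$ for all $r\in(0,1)$ would work equally well (for instance $R=2$ as above), so the conclusion is robust.
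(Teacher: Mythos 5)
Your proof is correct and follows essentially the same route as the paper: both pass to polar coordinates centred at the singularity $r$, let the Jacobian factor $\rho$ cancel the $1/|z-r|$ singularity, and bound the radial extent by $2$ to obtain the uniform constant $4\pi$. Your variant of enclosing $\mathbb{D}$ in $B(r,2)$ is a clean way to organize the same computation.
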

\begin{proof}
    Let us use the supsitution $z-r=\rho e^{i t}$, where $0\leqslant t<2\pi, 0<\rho<\rho(t)=|r-e^{it}|\leqslant r+1$. Then
    \begin{align}
        \iint_\mathbb{D}\frac{\dd x\dd y}{|z-r|}=\int_0^{2\pi}\dd t\int_0^{\rho(t)}\frac{\rho\dd\rho}{\rho}\leqslant \int_0^{2\pi}(r+1)\dd t\leqslant 4\pi.
    \end{align}
\end{proof}

Let $|w|=r$,
\begin{align*}
    I_1(w)&=\iint_\mathbb{D} \frac{(1-|z|^2)^{\alpha+1}(1-|w|^2)^{\alpha}}{2|1-\overline{z}w|^{\alpha+1}|z-w|}\dd x\dd y,  \\ 
    I_2(w)&=\iint_\mathbb{D} |1{-}\overline{z}w|^{\alpha-1}\left(1{-}\left|\frac{z{-}w}{1{-}\overline{w}z}\right|^2\right)^{\alpha{+}1}\left(1{-}\log \left|\frac{z{-}w}{1{-}\overline{w}z}\right|^2\right)\dd x\dd y. 
\end{align*}
Also, inequalities
\begin{equation}\label{trik}
|1-\overline{w}\zeta|\geqslant 1-|w|\quad \mbox{and}\quad |1-\overline{w}\zeta|\geqslant 1-|\zeta|
\end{equation}
can easily be veryfied.

The following two lemmas are crucial for the main result of this section:

\begin{lem}\label{i1-lema}
    There exists $c_3>0$ such that

    \begin{equation}
        I_1(w)\leqslant c_3 (1-|w|^2)^{\alpha}
    \end{equation}
for every $|w|<1$.
\end{lem}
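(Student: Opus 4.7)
My plan is to exploit the cancellation hidden in the quotient $(1-|z|^2)^{\alpha+1}/|1-\overline{z}w|^{\alpha+1}$. The first step is to pull the factor $(1-|w|^2)^\alpha$ out of the integral, reducing the claim to a uniform bound on
$$J(w) := \iint_\mathbb{D} \frac{(1-|z|^2)^{\alpha+1}}{|1-\overline{z}w|^{\alpha+1}|z-w|}\dd x\dd y.$$

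The key observation is the elementary estimate
$$1-|z|^2 = (1-|z|)(1+|z|) \leqslant 2(1-|z|) \leqslant 2|1-\overline{z}w|,$$
where the last inequality is a direct consequence of \eqref{trik} (after noting $|1-\overline{z}w|=|1-\overline{w}z|$). Because $\alpha+1>0$, raising to the $(\alpha+1)$-st power preserves the inequality and gives $(1-|z|^2)^{\alpha+1} \leqslant 2^{\alpha+1}|1-\overline{z}w|^{\alpha+1}$, which cancels the entire singular factor in the denominator, leaving
$$J(w) \leqslant 2^{\alpha+1}\iint_\mathbb{D} \frac{\dd x\dd y}{|z-w|}.$$

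To conclude I would invoke Lemma \ref{i1-eq}. The remaining integral is invariant under the rotation $z\mapsto e^{i\arg w}z$ (which preserves $\mathbb{D}$ and Lebesgue measure), so its value equals $M_1(|w|)$, which is bounded by $c_2$. Assembling the pieces yields $I_1(w) \leqslant 2^\alpha c_2(1-|w|^2)^\alpha$, i.e.\ the conclusion with $c_3=2^\alpha c_2$. I do not anticipate any real obstacle: the entire proof hinges on the single observation $(1-|z|^2)\leqslant 2|1-\overline{z}w|$, and once this is in hand the remaining work is entirely covered by Lemma \ref{i1-eq}. In particular, neither the logarithmic singularity nor the more delicate estimate of Lemma \ref{i2-eq} is needed for $I_1$; those will be required only for $I_2$.
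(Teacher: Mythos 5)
Your proposal is correct and is essentially identical to the paper's argument: the paper also uses \eqref{trik} in the form $|1-\overline{z}w|\geqslant 1-|z|$ to cancel the factor $|1-\overline{z}w|^{\alpha+1}$ against $(1-|z|^2)^{\alpha+1}$, and then reduces the remaining integral to Lemma \ref{i1-eq} via the rotation $s=\tfrac{w}{|w|}z$. Your write-up merely makes the intermediate steps (and the resulting constant $c_3=2^{\alpha}c_2$) explicit.
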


\begin{proof}
    Using (\ref{trik}) we get
$$I_1(w)\preceq (1-|w|^2)^{\alpha}\iint_\mathbb{D}\frac{\dd x\dd y}{|z-w|}.$$
Since we can use coordinate change $s=\frac{w}{|w|}z$, we can use Lemma \ref{i1-eq} to get our result.
\end{proof}

Let $\zeta=\varphi_w(z)=\frac{w-z}{1-\overline{w}z}$. Then, for each $w\in\mathbb{D}, \varphi_w$ is a conformal mapping of $\mathbb{D}$ satisfying the following identities:
\begin{align}\label{ident}
     z &=\frac{w-\zeta}{1-\overline{w}\zeta}, 1-|z|^2=\frac{(1-|z|^2)(1-|w|^2)}{|1-\overline{w}\zeta|^2}, \\ \notag
     1 &-\overline{w}z=\frac{1-|w|^2}{1-\overline{w}\zeta}, \dd z=-\frac{1-|w|^2}{(1-\overline{w}\zeta)^2}\dd\zeta
\end{align}

\begin{lem}\label{i2-lema}
    There exists $c_4>0$ such that

    \begin{equation}
        I_2(w)\leqslant c_4 (1-|w|^2)^{\alpha}
    \end{equation}
for every $|w|<1$.
\end{lem}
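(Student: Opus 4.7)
The plan is to exploit the Möbius invariance of the domain via the substitution $\zeta = \varphi_w(z) = \frac{w-z}{1-\overline{w}z}$, so that the Green-type quantity $\bigl|\frac{z-w}{1-\overline{w}z}\bigr|$ collapses to $|\zeta|$. From the identities (\ref{ident}) one has $|1-\overline{z}w|=|1-\overline{w}z|=(1-|w|^2)/|1-\overline{w}\zeta|$ and the Jacobian of $z\mapsto\zeta$ reads $dx\,dy=(1-|w|^2)^2/|1-\overline{w}\zeta|^4\,d\xi\,d\eta$. Collecting the powers of $(1-|w|^2)$ (namely $\alpha-1$ from $|1-\overline{z}w|^{\alpha-1}$ and $2$ from the Jacobian), a direct computation transforms the integral into
$$I_2(w) = (1-|w|^2)^{\alpha+1}\iint_{\mathbb{D}}\frac{(1-|\zeta|^2)^{\alpha+1}(1-\log|\zeta|^2)}{|1-\overline{w}\zeta|^{\alpha+3}}\,d\xi\,d\eta.$$

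Next, I would pass to polar coordinates $\zeta = \rho e^{it}$ and apply Lemma \ref{i2-eq} with $\beta = \alpha+3 > 1$ to bound the angular integral by $(1-|w|\rho)^{-(\alpha+2)}$. This reduces the problem to the radial estimate
$$I_2(w) \preceq (1-|w|^2)^{\alpha+1}\int_0^1 \frac{\rho(1-\rho^2)^{\alpha+1}(1-\log\rho^2)}{(1-|w|\rho)^{\alpha+2}}\,d\rho.$$
I would split the radial integral at $\rho = 1/2$. On $[0,1/2]$ the denominator is bounded below by a constant and the log factor is integrable, so this piece contributes a universal constant. On $[1/2,1]$, where $1-\log\rho^2$ is bounded, the substitution $t = 1-\rho$ followed by $s = |w|t/(1-|w|)$ turns the integral (up to a bounded prefactor) into $\int_0^{|w|/(2(1-|w|))} s^{\alpha+1}(1+s)^{-(\alpha+2)}\,ds$, whose integrand is $\sim 1/s$ at infinity and therefore produces at worst a logarithmic growth of order $1 + |\log(1-|w|)|$.

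The main obstacle, and the key observation, is that the estimate so far yields only
$$I_2(w) \preceq (1-|w|^2)^{\alpha+1}\bigl(1+|\log(1-|w|)|\bigr),$$
which is apparently off from the target $(1-|w|^2)^\alpha$ by a logarithmic factor. It is rescued by the elementary fact that $(1-|w|^2)(1+|\log(1-|w|)|)$ is bounded on $[0,1)$, because $t|\log t|\to 0$ as $t\to 0^+$. Absorbing the surplus factor $(1-|w|^2)$ into the logarithm gives $I_2(w) \leq c_4(1-|w|^2)^\alpha$, as required. The delicate part throughout is the careful bookkeeping of the many powers of $(1-|w|^2)$ and $(1-|w|\rho)$ emerging from the Möbius change of variables, together with the borderline logarithmic factor inherited from the Green function.
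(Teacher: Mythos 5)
Your proof is correct, and it follows the paper's own route for most of the way: the M\"obius substitution $\zeta=\varphi_w(z)$ with the identities (\ref{ident}), the resulting formula $I_2(w)=(1-|w|^2)^{\alpha+1}\iint_{\mathbb D}(1-|\zeta|^2)^{\alpha+1}(1-\log|\zeta|^2)\,|1-\overline{w}\zeta|^{-(\alpha+3)}\dd\xi\dd\eta$, polar coordinates, and Lemma \ref{i2-eq} with $\beta=\alpha+3$, all match the paper exactly. Where you genuinely diverge is the final radial estimate. The paper disposes of it in one line: since $1-r\rho\geqslant 1-r$ and $1-r\rho\geqslant 1-\rho$, one has $(1-r\rho)^{\alpha+2}\geqslant (1-r)(1-\rho)^{\alpha+1}$, which cancels the numerator's $(1-r^2)(1-\rho^2)^{\alpha+1}$ up to the constant $2^{\alpha+2}$ and leaves only the convergent integral $\int_0^1\rho(1-\log\rho^2)\dd\rho$ --- no splitting, no logarithm. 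You instead evaluate the radial integral sharply, pick up the factor $1+|\log(1-|w|)|$, and absorb it into the surplus power of $1-|w|^2$; this costs more bookkeeping but yields the marginally stronger bound $I_2(w)\preceq (1-|w|^2)^{\alpha+1}\bigl(1+|\log(1-|w|)|\bigr)$. One small imprecision: your ``bounded prefactor'' after the substitution $s=|w|t/(1-|w|)$ is actually $|w|^{-(\alpha+2)}$, which is bounded only for $|w|$ away from $0$; this is harmless because for, say, $|w|\leqslant 1/2$ one has $1-|w|\rho\geqslant 1/2$ and the radial integral is trivially bounded, but the case split should be stated.
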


\begin{proof}
    By using supstitution $s=\frac{w}{|w|}\zeta$, and $s=\rho e^{it}$ we get
\begin{align*}
    I_2(w)&=I_2(r)=\iint_\mathbb{D}\frac{(1-|w|^2)^{\alpha+1}(1-|\zeta|^2)^{\alpha+1}}{|1-\overline{w}\zeta|^{\alpha+3}}(1-\log |\zeta|^2)\dd\xi\dd\eta \\ \notag
    & =(1{-}|w|^2)^{\alpha} \int_0^1 (1{-}\rho^2)^{\alpha+1}(1{-}r^2)(1{-}\log \rho^2)\int_0^{2\pi}\frac{\dd t}{|1{-}r\rho e^{i t}|^{\alpha+3}}\rho\dd\rho.
\end{align*}
Using Lemma \ref{i2-eq} we get
\begin{align*}
    I_2(r)&\preceq (1-|w|^2)^{\alpha} \int_0^1\frac{(1-\rho^2)^{\alpha+1}(1-r^2)(1-\log \rho^2)}{|1-r\rho|^{\alpha+2}}\rho\dd\rho.
\end{align*}

Since $1-r\rho\geqslant 1-r$ and $1-r\rho\geqslant 1-\rho$ we have that

\begin{align*}
    I_2(r)\preceq (1-|w|^2)^{\alpha} \int_0^1\rho(1-\log \rho^2)\dd\rho \leqslant c_4 (1-|w|^2)^{\alpha}
\end{align*}

for some $c_4>0$ which does not depent of $0\leqslant r<1$.
\end{proof}

We are now ready to formulate the main result of this section, which is generalisation of Lemma 3.4 in Chen's paper \cite{lip1}. The proof of this result follows directly from Lemma \ref{i1-lema} and Lemma \ref{i2-lema}.

\begin{thm}\label{grin-lip}
    Let $g\in C(\mathbb{D})$ be such that $|g(z)|\leqslant M(1-|z|^2)^{-\alpha}, z\in\mathbb{D}$ for some $M>0$ and let $\alpha>0$ be arbitrary. Assume that $G_\alpha[g]$ is the Green potential of $g$ given by
    $$G_\alpha[g](w)=\iint_\mathbb{D} G_\alpha(z,w)g(z)\dd x\dd y.$$
    Then $\frac{\partial}{\partial w} G_\alpha[g]$ and $\frac{\partial}{\partial \overline{w}} G_\alpha[g]$ are both bounded in the unit disc $\mathbb{D}$.
\end{thm}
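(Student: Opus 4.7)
The strategy is to differentiate $G_\alpha[g]$ under the integral sign, apply the pointwise bounds (\ref{G-po-w}) and (\ref{G-po-w-konj}), and exploit the cancellation
\[
(1-|z|^2)^{\alpha+1}\cdot|g(z)|\;\leqslant\;M\,(1-|z|^2)
\]
to reduce the resulting integrals to the same kind that Lemmas \ref{i1-lema} and \ref{i2-lema} bound by a constant multiple of $(1-|w|^2)^{\alpha}\leqslant 1$.

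First I would justify the interchange of $\partial/\partial w$ and $\partial/\partial\overline{w}$ with the integral. The estimates (\ref{G-po-w}) and (\ref{G-po-w-konj}), combined with the weight bound on $g$, provide $L^{1}$ dominators on $\mathbb{D}$ uniform on compact subsets of the parameter $w$; integrability follows from the same calculations as in Lemmas \ref{i1-lema} and \ref{i2-lema}, up to the $(1-|z|^2)^{-\alpha}$ correction that is absorbed by the $(1-|z|^2)^{\alpha+1}$ factor built into those estimates.

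Next, for $\partial/\partial\overline{w}$ I would apply (\ref{G-po-w-konj}) and the cancellation above to get
\[
2\pi\left|\frac{\partial}{\partial\overline{w}}G_\alpha[g](w)\right|\;\leqslant\;\frac{M(1-|w|^{2})^{\alpha}}{2}\iint_{\mathbb{D}}\frac{1-|z|^{2}}{|1-\overline{z}w|^{\alpha+1}|z-w|}\dd x\dd y,
\]
and then pass to the coordinates $\zeta=\varphi_{w}(z)$ via (\ref{ident}). The resulting expression, after polar coordinates $\zeta=\rho e^{it}$, carries a factor $(1-|w|^{2})$ in front of an angular integral of the form handled by Lemma \ref{i2-eq}. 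The radial integral is then estimated as in the proof of Lemma \ref{i2-lema} via $1-\rho^{2}\leqslant 2(1-r\rho)$, and the boundary blow-up in each range of $\alpha$ is precisely absorbed by the leading $(1-|w|^{2})$. For $\partial/\partial w$, the $I_1$-type summand in (\ref{G-po-w}) is treated identically; the $I_2$-type summand with the logarithmic factor is dispatched by the same substitution, now yielding an angular integrand of exponent $3-\alpha$ to which Lemma \ref{i2-eq} again applies.

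The main obstacle is that after the cancellation the integrand is strictly \emph{larger} than that of $I_1$ (since $\alpha>0$ forces $(1-|z|^2)^{\alpha+1}\leqslant 1-|z|^2$), so Lemma \ref{i1-lema} cannot be applied verbatim: the crude bound $1-|z|^2\leqslant 2|1-\overline{z}w|$ used there would give an integral that blows up as $|w|\to 1$. One really needs the full M\"obius geometry encoded in (\ref{ident}) together with Lemma \ref{i2-eq}, and the hypothesis $\alpha>0$ enters precisely at this step, since the net exponent of $1-|w|^{2}$ after all cancellations equals $\alpha$, which must be non-negative for the bound to hold uniformly up to the boundary.
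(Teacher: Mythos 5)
Your proposal is correct in substance and follows the route the paper intends: differentiate under the integral sign, insert the pointwise estimates (\ref{G-po-w}) and (\ref{G-po-w-konj}), use $|g(z)|\leqslant M(1-|z|^2)^{-\alpha}$ to trade $(1-|z|^2)^{\alpha+1}$ for $(1-|z|^2)$, and control the resulting integrals with the machinery of this subsection. The paper itself offers only the sentence that the theorem ``follows directly from Lemma \ref{i1-lema} and Lemma \ref{i2-lema}'', and you are right that this is not literally so: after the weight of $g$ is absorbed the integrands are \emph{larger} than those of $I_1$ and $I_2$ (for $\alpha>0$ one has $(1-|z|^2)^{\alpha+1}\leqslant 1-|z|^2$), so the arguments of the lemmas must be re-run rather than quoted; your re-run of the $I_2$-type term (angular exponent $3-\alpha$) and of the $I_1$-type term is sound. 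Two caveats. First, your assertion that for the $I_1$-type term ``one really needs the full M\"obius geometry'' overshoots: the two inequalities in (\ref{trik}), used together to split $|1-\overline{z}w|^{\alpha+1}\geqslant (1-|z|)(1-|w|)^{\alpha}$, give
\begin{equation*}
\frac{(1-|z|^2)\,(1-|w|^2)^{\alpha}}{|1-\overline{z}w|^{\alpha+1}\,|z-w|}\leqslant\frac{2^{\alpha+1}}{|z-w|},
\end{equation*}
after which Lemma \ref{i1-eq} finishes; this is exactly the proof of Lemma \ref{i1-lema} with the exponent of $|1-\overline{z}w|$ distributed differently between the two factors, and it is simpler than the substitution $\zeta=\varphi_w(z)$. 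Second, Lemma \ref{i2-eq} is stated only for $\beta>1$, so the exponents $3-\alpha$ (and $4-\alpha$, if you do route the $I_1$-type term through the M\"obius change of variables) require a separate, elementary remark for $\alpha\geqslant 2$ (resp.\ $\alpha\geqslant 3$), where the angular kernel is bounded or at worst logarithmic and is absorbed by the factor $(1-r^2)(1-\rho^2)$. Neither point affects the validity of your conclusion.
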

As a ditect consequence of Theorem \ref{grin-lip} and Theorem \ref{alpha-har-lip} we have the main result of this paper.

\begin{thm}
    \label{main}
    Assume that $g \in C(\mathbb{D})$ is such that $(1-|z|^2)^\alpha g$ is bounded and $u\in V_{\mathbb{D}\rightarrow \Omega}[g]$ with the representation $u(w) = v(w) + G_{\alpha}[g](w)$. If $\alpha > 0$, and $\underline{v}$ is Lipshitz, then $u$ is also Lipshitz continuous on $\ID$.
\end{thm}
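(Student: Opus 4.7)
The plan is to treat the two summands in the representation $u = v + G_\alpha[g]$ separately and then add: apply Theorem \ref{grin-lip} to get that $G_\alpha[g]$ is Lipschitz on $\mathbb{D}$, and apply the $\alpha$-harmonic case of Theorem \ref{alpha-har-lip} to get that $v$ is Lipschitz on $\mathbb{D}$. A sum of Lipschitz functions is Lipschitz, so these two steps together give the conclusion.

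For the Green-potential summand I would rewrite the hypothesis that $(1-|z|^2)^\alpha g$ is bounded as $|g(z)| \leq M(1-|z|^2)^{-\alpha}$ for some $M>0$; this is precisely the hypothesis of Theorem \ref{grin-lip}. Since $\alpha>0$, that theorem asserts that both $\partial_w G_\alpha[g]$ and $\partial_{\overline{w}} G_\alpha[g]$ are bounded on $\mathbb{D}$. Because $\mathbb{D}$ is convex and $G_\alpha[g]$ is smooth on the interior, the uniform bound on its Euclidean gradient yields a global Lipschitz estimate by the standard line-segment argument.

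For the $\alpha$-harmonic summand, note that $v = P_\alpha[f]$ where $f = \underline{v}$; since $\underline{v}$ is Lipschitz on the compact torus $\mathbb{T}$, it is bounded and absolutely continuous with $\underline{v}' \in L^\infty(\mathbb{T})$. The uniform boundedness of $f$ combined with the pointwise integrability properties of the $\alpha$-Poisson kernel gives $v \in \mathcal{H}^\infty_\mathcal{G}(\mathbb{D}) \subset \mathcal{H}^1_\mathcal{G}(\mathbb{D})$, i.e.\ $v$ sits in the generalized Hardy class $h^1(\mathbb{U})$ required by Theorem \ref{alpha-har-lip}. The $\alpha$-harmonic case ($\alpha>0$) of Theorem \ref{alpha-har-lip} then gives that $v$ is Lipschitz on $\mathbb{D}$. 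Combining with the previous paragraph, $u = v + G_\alpha[g]$ is Lipschitz on $\mathbb{D}$.

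The proof is therefore essentially a bookkeeping combination of the two refinements established in Subsections 2.2 and 2.3, and I do not anticipate serious obstacles. The only point that deserves to be spelled out explicitly, and the most likely place for a referee to ask for clarification, is the verification of the Hardy-class hypothesis in Theorem \ref{alpha-har-lip}: namely, that the bounded Lipschitz boundary datum $f=\underline{v}$ produces $v=P_\alpha[f]$ lying in $h^1(\mathbb{U})$. Once this is noted, the two applications are immediate and the theorem follows.
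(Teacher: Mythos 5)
Your proposal is correct and follows essentially the same route as the paper, which derives Theorem \ref{main} directly by combining Theorem \ref{grin-lip} (bounded derivatives of the Green potential, hence Lipschitz) with Theorem \ref{alpha-har-lip} applied to the $\alpha$-harmonic part $v$. Your explicit attention to verifying the $h^1$ hypothesis for $v=P_\alpha[\underline{v}]$ is a reasonable extra detail that the paper leaves implicit.
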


\end{document}